\definecolor{accentcol}{rgb}{0.0000, 0.4470, 0.7410} 
\definecolor{accentcol2}{rgb}{0.8500, 0.3250, 0.0980} 
\DeclareSIUnit\decibelmilliwatt{dBm}
\DeclareMathOperator*{\diag}{diag}
\DeclareMathOperator*{\rank}{rank}
\DeclareMathOperator*{\sgn}{sgn}
\DeclareMathOperator*{\minimize}{minimize}
\renewcommand{\vec}[1]{{\mathbf{#1}}}
\newcommand{\mat}[1]{{\mathbf{#1}}}
\newcommand{\RR}{\mathbb{R}}
\renewcommand{\Im}{\operatorname{Im}}
\newtheorem{theorem}{Theorem}[section]
\newtheorem{proposition}[theorem]{Proposition}
\begin{document}

\title{Single-Source Localization as an Eigenvalue Problem}

\author{Martin~Larsson,
        Viktor~Larsson,
        Kalle~Åström,
        and~Magnus~Oskarsson
\thanks{M. Larsson is with the Centre for Mathematical Sciences, Lund University, Lund, Sweden, and Combain Mobile AB, Lund, Sweden (e-mail: martin.larsson@math.lth.se).}
\thanks{V. Larsson, K. Åström, and M. Oskarsson are with the Centre for Mathematical Sciences, Lund University, Lund, Sweden (e-mail: viktor.larsson@math.lth.se, karl.astrom@math.lth.se, magnus.oskars\-son\-@math.lth.se).}}

\markboth{IEEE Transactions on signal processing,~Vol.~73, 2025}%
{Larsson \MakeLowercase{\textit{et al.}}: Single-Source Localization as an Eigenvalue Problem}

\maketitle

\begin{abstract}
    This paper introduces a novel method for solving the single-source localization problem, specifically addressing the case of trilateration.
    We formulate the problem as a weighted least-squares problem in the squared distances and demonstrate how suitable weights are chosen to accommodate different noise distributions.
    By transforming this formulation into an eigenvalue problem, we leverage existing eigensolvers to achieve a fast, numerically stable, and easily implemented solver.
    Furthermore, our theoretical analysis establishes that the globally optimal solution corresponds to the largest real eigenvalue, drawing parallels to the existing literature on the trust-region subproblem.
    Unlike previous works, we give special treatment to degenerate cases, where multiple and possibly infinitely many solutions exist. We provide a geometric interpretation of the solution sets and design the proposed method to handle these cases gracefully.
    Finally, we validate against a range of state-of-the-art methods using synthetic and real data, demonstrating how the proposed method is among the fastest and most numerically stable.
\end{abstract}

\begin{IEEEkeywords}
source localization, trilateration, global optimization, generalized trust-region subproblem
\end{IEEEkeywords}

\section{Introduction}

\IEEEPARstart{T}{he} single-source localization problem has received a considerable amount of attention due to its broad application within areas such as GNSS positioning \cite{petropoulos_chapter_2021}, wireless networks \cite{li_review_2020}, molecular conformations \cite{dunitz_distance_1990}, robot kinematics \cite{geng_3-2-1_1994}, and indoor positioning \cite{ma_wi-fi_2022}. The problem consists of finding an unknown receiver position using estimates of distances to known sender locations, also called anchors or landmarks. These distances are often acquired using approaches based on Time Of Arrival (TOA), Received Signal Strength (RSS), or Time Difference Of Arrival (TDOA). In all cases, some signal, e.g., radio, sound, or light, is emitted from the senders and received at the sought position, or vice versa. 

In the case of TOA, the one-way propagation time of the signal is measured, and knowing the propagation speed in the medium, the distance traveled can be calculated. RSS instead utilizes the fact that signals get attenuated in the medium, and by modeling this attenuation, a distance measurement can be derived from the received signal strength.
Solving the single-source localization problem for the cases of TOA and RSS is called \emph{trilateration}.
TDOA is similar to TOA, but the former features an additional unknown offset in the distance measurement, typically due to unsynchronized clocks, that needs to be estimated. This fundamentally changes the localization problem and is referred to as \emph{multilateration}. In this paper, we focus solely on trilateration.

\begin{figure}
	\centering
	\begin{tikzpicture}[scale=1.0, every node/.style={scale=1.0},
		dot/.style = {circle, fill, minimum size=#1,
			inner sep=0pt, outer sep=0pt},
		dot/.default = 4pt,  
		circ/.style = {circle, draw, dashed, minimum size=#1,
			inner sep=0pt, outer sep=0pt}, 
		]
		\node[dot, label={[above left]$\vec{x}$}, color=accentcol] (x) at (-0.0051,0.0496) {};
		\node[dot, label={[left]$\vec{s}_1$}] (s1) at (1.8,0.2) {};
		\node[dot, label={[left]$\vec{s}_2$}] (s2) at (-0.8,1.5) {};
		\node[dot, label={[left]$\vec{s}_3$}] (s3) at (-1.0,-1.0) {};
		\node[circ=3.2cm] at (s1) {};
		\node[circ=3.0cm] at (s2) {};
		\node[circ=2.5cm] at (s3) {};
        \draw[dotted] (s1) -- +(0:0.5*3.2cm) node[pos=0.5, above] {$d_1$};
        \draw[dotted] (s2) -- +(30:0.5*3.0cm) node[pos=0.5, above left] {$d_2$};
        \draw[dotted] (s3) -- +(-30:0.5*2.5cm) node[pos=0.5, above] {$d_3$};
	\end{tikzpicture}
	\caption{Three senders $\vec{s}_1, \vec{s}_2, \vec{s}_3$ and the maximum likelihood solution $\vec{x}$ to the trilateration problem. The circles indicate the corresponding distance measurements $d_1$, $d_2$, and $d_3$.}
	\label{fig:trilat}
\end{figure}

The problem of trilateration can be formalized as having $m$ senders $\vec{s}_j \in \RR^n$, $j=1,\dotsc,m$, of known position with distance measurements $d_j \in \RR$ to an unknown receiver position $\vec{x} \in \RR^n$.
This can be modeled as
\begin{equation}
    \label{eq:toa_model}
	d_j = \|\vec{x}-\vec{s}_j\| + \epsilon_j,
\end{equation}
where $\epsilon_j$ represents additive noise in the measurements.
See \figurename~\ref{fig:trilat} for an example in 2D with three senders.
Provided i.i.d.\ Gaussian noise, the Maximum Likelihood (ML) estimator for $\vec{x}$ is given by solving
\begin{equation}
	\label{eq:ls}
	\minimize_\vec{x} \sum_{j=1}^m (\|\vec{x}-\vec{s}_j\| - d_j)^2.
\end{equation}
This is a nonlinear, nonsmooth, and nonconvex optimization problem that can have multiple local minima. As a result, proposed approaches have so far been limited to iterative methods \cite{luke_simple_2017, beck_iterative_2008, jyothi_solvit_2020, sirola_closed-form_2010, chan_exact_2006} or have relied on some form of relaxation of the problem.
Common for the iterative methods is that, while they under some circumstances guarantee convergence to a stationary point of \eqref{eq:ls}, they do not guarantee convergence to a global minimum and are thus dependent on a reasonable initialization.
A convex relaxation of the problem was proposed in \cite{beck_exact_2008}, where it was solved using Semidefinite Programming (SDP), and another convex formulation was given in \cite{ismailova_penalty_2016}, where the problem was solved using a variation of the Convex-Concave Procedure (CCP) \cite{lipp_variations_2016}.
However, due to these relaxations, there is no guarantee that the solution is optimal in the original cost function \eqref{eq:ls}.

A different approach is given by minimizing the error in the squared distances
\begin{equation}
	\label{eq:sls}
	\minimize_\vec{x} \sum_{j=1}^m (\|\vec{x}-\vec{s}_j\|^2 - d_j^2)^2.
\end{equation}
This formulation has the benefit of being polynomial and smooth, although, it lacks the statistical interpretation that \eqref{eq:ls} has and gives higher influence to larger errors and distances.
To mitigate this, a weighting of the terms can be introduced to better approximate \eqref{eq:ls} \cite{cheung_least_2004, chen_achieving_2013}.
This can be extended further by employing an Iteratively Reweighted Least Squares (IRLS) scheme that indeed converges to \eqref{eq:ls} \cite{beck_iterative_2008, ismailova_improved_2015}, or by modifying the weights based on uncertainties in the sender positions \cite{chen_reaching_2014}.
Common for these weighted approaches is that they all assume Gaussian additive noise in the distances.
A more detailed comparison of the two cost functions \eqref{eq:ls} and \eqref{eq:sls} can be found in \cite{larsson_accuracy_2010,chen_achieving_2013}.

A solution to \eqref{eq:sls} was provided in \cite{beck_exact_2008}, where they considered the equivalent problem
\begin{equation}
	\label{eq:sls_constrained}
	\minimize_{\vec{x},\alpha} \sum_{j=1}^m (\alpha -2\vec{x}^T \vec{s}_j + \vec{s}_j^T\vec{s}_j - d_j^2)^2,
\end{equation}
under the constraint $\alpha = \vec{x}^T\vec{x}$. This is a Generalized Trust Region Subproblem (GTRS) which was reduced to a single equation in one variable and subsequently solved using bisection.
A closed-form approximate solution to \eqref{eq:sls} was proposed in \cite{zhou_closed-form_2011}, where they introduced the artificial constraint $\sum_j \|\vec{x}-\vec{s}_j\|^2 = \sum_j d_j^2$, which is not satisfied in general, resulting in suboptimal solutions. In the minimal case, when $m=n$, an exact solution can be found and several closed-form methods have been proposed \cite{thomas_revisiting_2005, manolakis_efficient_1996, coope_reliable_2000}.
Various linear methods have also been proposed, some of which are equivalent to the unconstrained version of \eqref{eq:sls_constrained}, see e.g. \cite{caffery_new_2000, navidi_statistical_1998, stoica_lecture_2006, zekavat_handbook_2012.ch2}.

In this paper, we present a novel method for solving the trilateration problem, specifically, the weighted version of \eqref{eq:sls}, by transforming it into an eigenvalue problem.
Our main contributions here are threefold.
First, in Section~\ref{sec:weighting}, we show how to derive suitable weights to more closely approximate \eqref{eq:ls}, and in contrast to previous works, we demonstrate how our approach generalizes to a wider range of noise distributions, e.g., log-normal noise.
Second, in Section~\ref{sec:eigenvalue_formulation}, we show that the problem of finding the stationary points of the weighted cost function \eqref{eq:approx_cost} can be formulated as an eigenvalue problem, and in particular, we prove that the global minimum corresponds to the largest real eigenvalue of a certain matrix.
Third, we treat degenerate cases, where the trilateration problem has multiple solutions, an area that has not received sufficient attention in the literature.
The proposed method is summarized in Algorithm~\ref{alg:proposed}, and in Sections~\ref{sec:experiment_synth} and \ref{sec:experiment_real}, we validate it against several state-of-the-art methods on both real and synthetic data.
The material presented here is partially based on the conference paper \cite{larsson_optimal_2019} and the manuscript \cite[Paper~VIII]{larsson_localization_2022}\footnote{The manuscript in \cite{larsson_localization_2022} is an earlier unpublished version of this paper.}.

\section{The Weighted Cost Function}
\label{sec:weighting}
In the presence of noise, minimizing the cost function in \eqref{eq:sls} is not the same as minimizing the one in \eqref{eq:ls}. However, with a suitable weighting of the terms, the former provides an accurate approximation of the latter. We generalize this and show how weights can be derived for a wider range of noise distributions.

We start by introducing the residual functions
\begin{equation}
	\label{eq:res}
	r_j(\vec{x}) = \Psi_j(\|\vec{x}-\vec{s}_j\|^2) - \Psi_j(d_j^2),
\end{equation}
for $j=1,\dotsc,m$, where $d_j$ are noisy distance measurements and $\Psi_j(z)$ are normalization transformations differentiable at $z = d_j^2$.
Let $\vec{r}(\vec{x}) \in \RR^m$ denote the residual vector with elements $r_j(\vec{x})$.
The idea is to choose $\Psi_j(z)$ such that the noise in the normalized measurements $\Psi_j(d_j^2)$ is Gaussian, and, consequently, minimizing the residuals $r_j(\vec{x})$ in the least squares sense yields an ML-estimate for $\vec{x}$.
Formally, we seek to minimize
\begin{equation}
	\label{eq:opt_cost}
	h_0(\vec{x}) = \vec{r}(\vec{x})^T \mat{P} \vec{r}(\vec{x}),
\end{equation}
where $\mat{P}^{-1} \in \RR^{m \times m}$ denotes the known covariance matrix of $\vec{r}(\vec{x})$.
Note that if $\mat{P}=\mat{I}$ and we choose $\Psi_j(z) = z$, we get the optimization problems in \eqref{eq:sls}. Similarly, choosing $\Psi_j(z) = \sqrt{z}$ yields the problem in \eqref{eq:ls} provided $d_j \geq 0$.

Depending on $\Psi_j(z)$, minimizing $h_0(\vec{x})$ can be a difficult problem. We will therefore, in each residual $r_j(\vec{x})$, replace $\Psi_j(z)$ with its first order Taylor approximation at $d_j^2$.
\begin{equation}
	\Psi_j(z) \approx \Psi_j(d_j^2) + \Psi_j'(d_j^2)(z-d_j^2).
\end{equation}
Insertion into \eqref{eq:res} yields the approximate residuals
\begin{equation}
	\label{eq:approx_res}
    \tilde{r}_j(\vec{x}) = \Psi_j'(d_j^2) \left( \|\vec{x}-\vec{s}_j\|^2-d_j^2 \right).
\end{equation}
Naturally, this approximation becomes more accurate as $d_j^2$ approaches $\|\vec{x}-\vec{s}_j\|^2$, and in noiseless case, the approximation is exact.
Now, let $\mat{W}$ be a positive definite weighting matrix with elements $w_{ij} = \Psi_i'(d_i^2) P_{ij} \Psi_j'(d_j^2)$. Note that $\mat{W}$ is constant and does not depend on $\vec{x}$.
The cost function approximating $h_0(\vec{x})$ can now be written as
\begin{equation}
	\label{eq:approx_cost}
	h(\vec{x}) = \frac{1}{4}\sum_{i=1}^m \sum_{j=1}^m w_{ij} (\|\vec{x}-\vec{s}_i\|^2 - d_i^2)(\|\vec{x}-\vec{s}_j\|^2 - d_j^2),
\end{equation}
where we have added a factor of $1/4$ for later convenience.
This is the function for which the proposed method finds the global minimum.

\subsection{Different Noise Distributions}
\label{sec:noise_distributions}
Different forms of measurement noise are accommodated by finding suitable normalization transformations $\Psi_j(z)$.
For example, when working with TOA measurements, the distances $d_j$ are often modeled to contain Gaussian additive noise. In this case, letting $\Psi_j(z) = \sqrt{z}$ will cause $\Psi_j(d_j^2) = d_j$ to be Gaussian assuming $d_j > 0$. The weighting in the approximate residuals \eqref{eq:approx_res} is then given by
\begin{equation}
	\Psi_j'(z) = \frac{1}{2\sqrt{z}} \quad \Rightarrow \quad \Psi_j'(d_j^2) =  \frac{1}{2d_j}.
\end{equation}
In particular, if $\mat{P} = \mat{I}$, then $\mat{W}$ is diagonal with elements $w_{jj} = 1/4d_j^2$.
The same weighting was derived in \cite{cheung_least_2004}, and in \cite{chen_achieving_2013} it was shown to result in solutions reaching the Cramer-Rao Lower Bound (CRLB) accuracy.

A different example is when radio RSS measurements are used. Then the signal strength can be modeled using the log-distance path loss model \cite[Chapter~8]{sharp_wireless_2019}, also known as the one-slope model \cite[Chapter~4.7]{cost_action_231_1999}, given by
\begin{equation}
    \label{eq:log-distance_path_loss}
	C_j = (C_0)_j - 10 \eta_j \log_{10}(\|\vec{x}-\vec{s}_j\|) + \epsilon_j,
\end{equation}
where $(C_0)_j$ and $\eta_j$ are known parameters and $\epsilon_j$ is zero mean Gaussian noise. Given an RSS measurement $C_j$ the corresponding distance measurement $d_j$ can be calculated as
\begin{equation}
    \label{eq:rss_distance_measurement}
    d_j^2 = 10^{\frac{(C_0)_j - C_j}{5\eta_j}}
    = 10^{-\frac{\epsilon_j}{5\eta_j}} \|\vec{x}-\vec{s}_j\|^2.
\end{equation}
Letting $\Psi_j(z) = 5 \eta_j \log_{10}(z)$, we get that $\Psi_j(d_j^2)$ is Gaussian and the weighting in the approximate residuals \eqref{eq:approx_res} are given by
\begin{equation}
	\Psi_j'(d_j^2) = \frac{5 \eta_j}{d_j^2 \log 10}.
\end{equation}

Note that these weights are undefined when $d_j = 0$. In practice, this is not a problem as we can clamp $d_j$ to some suitable small number, e.g., $d_j \gets \max(d_j, 10^{-3})$ for $j = 1,\dotsc,m$.

\subsection{Trilateration With Partially Known Receiver Position}
\label{sec:partially_known_receiver}
In some scenarios, the receiver position might be partially known, e.g., when noiseless Angle Of Arrival (AOA) measurements are present. In 3D, a known azimuth angle could confine the receiver to a vertical plane, while an additional elevation angle could further confine it to a line. With a suitable transformation, the restricted space can be aligned to the axes resulting in partially known receiver coordinates.
It turns out that, modifying the cost function in \eqref{eq:approx_cost} to allow for this, reduces the trilateration problem to another one in a lower dimension.

Assume that $k$ coordinates of the receiver position are known. We can then partition $\vec{x}$ into $\vec{x}' \in \RR^{n-k}$ and $\vec{x}'' \in \RR^k$, representing the unknown and known coordinates of $\vec{x}$, respectively. Correspondingly, we partition the sender positions $\vec{s}_j$ into $\vec{s}_j' \in \RR^{n-k}$ and $\vec{s}_j'' \in \RR^k$. Given that $\|\vec{x}-\vec{s}_j\|^2 = \|\vec{x}'-\vec{s}_j'\|^2 + \|\vec{x}''-\vec{s}_j''\|^2$, we can rewrite the approximate residuals in \eqref{eq:approx_res} as
\begin{equation}
	\tilde{r}_j(\vec{x}) = \Psi_j'(d_j^2) \left( \|\vec{x}'-\vec{s}_j'\|^2 - (d_j')^2 \right).
\end{equation}
where $(d_j')^2 = d_j^2 - \|\vec{x}''-\vec{s}_j''\|^2$. These residuals are on the same form as \eqref{eq:approx_res} but over a lower dimension and can thus be solved using the proposed method.

\section{Eigenvalue Formulation}
\label{sec:eigenvalue_formulation}
In this section, we will derive a method for minimizing $h(\vec{x})$ in \eqref{eq:approx_cost} by transforming the first-order optimality conditions into an eigenvalue problem. We then prove that the global minimizer can be extracted from the largest real eigenvalue. We will also show how to handle degenerate cases, where more than one global minimizer exists.

Differentiating $h(\vec{x})$ and collecting the terms by degree yields
\begin{align}
	\nabla h(\vec{x}) &= \sum_{i=1}^m \sum_{j=1}^m w_{ij} (\|\vec{x}-\vec{s}_i\|^2 - d_i^2)(\vec{x}-\vec{s}_j) \label{eq:gradient} \\
	&= \left(\sum_{i=1}^m \sum_{j=1}^m w_{ij} \right) (\vec{x}^T\vec{x})\vec{x} \\
	&- ((\vec{x}^T\vec{x}) \mat{I} + 2\vec{x}\vec{x}^T)\left(\sum_{i=1}^m \sum_{j=1}^m w_{ij}\vec{s}_i\right) \\
	&+ \left(\sum_{i=1}^m \sum_{j=1}^m w_{ij} \left(2\vec{s}_j\vec{s}_i^T + (\vec{s}_i^T\vec{s}_i-d_i^2) \mat{I} \right) \right)\vec{x} \label{eq:gradient_A} \\
	&- \sum_{i=1}^m \sum_{j=1}^m w_{ij}(\vec{s}_i^T\vec{s}_i - d_i^2)\vec{s}_j, \label{eq:gradient_g}
\end{align}
where in \eqref{eq:gradient} we exploited the symmetry of $\mat{W}$.

To solve $\nabla h(\vec{x}) = 0$, we will simplify the gradient expression in three ways.
First, note that the cost function is homogeneous in $w_{ij}$, and since $\mat{W} \succ 0$, we can w.l.o.g. assume that $\sum_{ij} w_{ij} = 1$. This removes the coefficient of the third-order term.
Second, similar to \cite{zhou_closed-form_2011}, by applying the translation $\vec{t} = \sum_{ij} w_{ij}\vec{s}_i$ to the senders, i.e., $\vec{s}_j \leftarrow \vec{s}_j - \vec{t}$, we ensure that $\sum_{ij} w_{ij}\vec{s}_i = 0$, canceling the second-order term. This does not change the problem as long as any solution $\vec{x}$ is translated back accordingly. The gradient can now be written as
\begin{equation}
	\label{eq:diff_Ag}
	\nabla h(\vec{x}) = (\vec{x}^T\vec{x})\vec{x} - \mat{A}\vec{x} + \vec{g},
\end{equation}
where $\mat{A}$ and $\vec{g}$ are given in \eqref{eq:gradient_A} and \eqref{eq:gradient_g}, respectively.
Third, note that $\mat{A}$ is real and symmetric and can thus be diagonalized by an orthogonal matrix $\mat{Q}$.
Letting $\vec{y} = \mat{Q}^T\vec{x}$, we construct the new cost function $f(\vec{y}) = h(\mat{Q}\vec{y})$. Note that, minimizing $f(\vec{y})$ is equivalent to minimizing $h(\vec{x})$, where the senders $\vec{s}_j$ have been rotated using $\mat{Q}$.
The gradient of $f(\vec{y})$ now becomes
\begin{equation}
	\label{eq:diff_Db}
	\nabla f(\vec{y}) = (\vec{y}^T\vec{y})\vec{y} - \mat{D}\vec{y} + \vec{b},
\end{equation}
where $\mat{D} = \mat{Q}^T\mat{A}\mat{Q}$ is diagonal and $\vec{b} = \mat{Q}^T\vec{g}$. Furthermore, let the elements of $\mat{D}$ be sorted such that $D_{11} \geq D_{22} \geq \dotsb \geq D_{nn}$.

Solving for the stationary points is equivalent to solving the $n$ equations
\begin{equation}
	\label{eq:first_order_opt}
	(\vec{y}^T\vec{y})y_k - D_{kk}y_k + b_k = 0,
\end{equation}
where $k=1,\dotsc,n$.
Multiplying the $k$th equation in \eqref{eq:first_order_opt} with $y_k$ we get
\begin{equation}
	\label{eq:first_order_opt2}
	(\vec{y}^T\vec{y})y_k^2 - D_{kk}y_k^2 + b_k y_k = 0,
\end{equation}
which, naturally, are also satisfied at a stationary point.
Letting $\vec{y}^2 = (y_1^2, y_2^2, \dotsc, y_n^2)^T$ denote the vector of squared coordinates, we use \eqref{eq:first_order_opt2}, \eqref{eq:first_order_opt} and the trivial equation $\vec{y}^T\vec{y} = \vec{1}^T\vec{y}^2$ to form the eigendecomposition
\begin{equation}
	\label{eq:eig_M}
	(\vec{y}^T\vec{y})
	\begin{pmatrix}
		\vec{y}^2 \\
		\vec{y} \\
		1
	\end{pmatrix}
	=
	\underbrace{
	\begin{pmatrix}
		\mat{D}   & -\diag(\vec{b}) & \vec{0}  \\
		\mat{O}   & \mat{D}         & -\vec{b} \\
		\vec{1}^T & \vec{0}^T       & 0        \\
	\end{pmatrix}
	}_{\triangleq \mat{M}}
	\begin{pmatrix}
		\vec{y}^2 \\
		\vec{y} \\
		1
	\end{pmatrix},
\end{equation}
where $\mat{O}$ denotes the zero matrix, and $\vec{1}$ denotes a vector of ones.
Note that the matrix $\mat{M}$ does not depend on $\vec{y}$, and any stationary point $\vec{y}$ of $f(\vec{y})$ corresponds to an eigenpair of $\mat{M}$ on the form in \eqref{eq:eig_M}.
In the following section, we will investigate the converse, i.e., how to extract all stationary points given the eigenvalues of $\mat{M}$.

\subsection{Finding All Stationary Points}
\label{sec:stationary_points}

\begin{figure}
	\centering
	\begin{subfigure}{0.45\columnwidth}
		\centering
		\begin{tikzpicture}[
			dot/.style = {circle, fill, minimum size=#1,
				inner sep=0pt, outer sep=0pt},
			dot/.default = 4pt  
			]
			\draw[dashed] (0,-2) -- (0,2)
				node [pos=0.6] (c) {}
				node [pos=0.2, dot, label={[left]$\vec{s}_1$}] (s1) {}
				node [pos=0.4, dot, label={[left]$\vec{s}_2$}] (s2) {}
				node [pos=0.8, dot, label={[left]$\vec{s}_3$}] (s3) {};
			\node[circle, minimum size=50, inner sep=0pt, outer sep=0pt] (circ) at (c) {};
			\node[dot, label={[right]$\vec{x}_1$}, color=accentcol] (x) at (circ.0) {};
			\node[dot, label={[left]$\vec{x}_2$}, color=accentcol] (x2) at (circ.180) {};
			\draw (s1) -- (x);
			\draw (s2) -- (x);
			\draw (s3) -- (x);
		\end{tikzpicture}
		\caption{}
		\label{fig:degenerate_2d}
	\end{subfigure}
	\begin{subfigure}{0.45\columnwidth}
	\centering
	\begin{tikzpicture}[
		dot/.style = {circle, fill, minimum size=#1,
			inner sep=0pt, outer sep=0pt},
		dot/.default = 4pt  
		]
		\draw[dashed] (-1,-2) -- (1,2)
		node [pos=0.6, dot=3pt] (c) {}
		node [pos=0.1, dot, label={[left]$\vec{s}_1$}] (s1) {}
		node [pos=0.3, dot, label={[left]$\vec{s}_2$}] (s2) {}
		node [pos=0.9, dot, label={[left]$\vec{s}_3$}] (s3) {};
		\node[draw, ellipse, very thick, color=accentcol, minimum width=35, minimum height=50, inner sep=0pt, outer sep=0pt, rotate=63.43] (circ) at (c) {};
		\node[dot, label={[right]$\vec{x}$}, color=accentcol] (x) at (circ.-50) {};
		\draw (s3) -- (x);
		\draw (s1) -- (x);
		\draw (s2) -- (x);
	\end{tikzpicture}
	\caption{}
	\label{fig:degenerate_3d}
	\end{subfigure}
	\caption{Two degenerate sender configurations.
		(a) When the senders are collinear in 2D, there are two possible solutions, $\vec{x}_1$ and $\vec{x}_2$. (b) When the senders are collinear in 3D, there are infinitely many solutions located on a circle.}
	\label{fig:degenerate}
\end{figure}

It is not necessarily the case that any eigenpair of $\mat{M}$ is on the form in \eqref{eq:eig_M}. 
However, to find all stationary points it is sufficient to, for each eigenvalue, find all corresponding eigenvectors on the form in \eqref{eq:eig_M}. The approach for doing this can be divided into two cases, depending on whether $\lambda \mat{I} - \mat{D}$ is singular.

\begin{proposition}
	\label{prop:stationary_point}
	If $\lambda$ is an eigenvalue of $\mat{M}$ and $\lambda \mat{I} - \mat{D}$ has full rank, then $\vec{y} = -(\lambda \mat{I} - \mat{D})^{-1} \vec{b}$ is the unique stationary point of $f(\vec{y})$ satisfying $\lambda = \vec{y}^T\vec{y}$.
\end{proposition}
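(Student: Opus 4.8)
The plan is to exploit the block structure of $\mat{M}$ directly rather than working with the characteristic polynomial. First I would write a generic eigenvector of $\mat{M}$ for the eigenvalue $\lambda$ as $(\vec{u}^T, \vec{v}^T, w)^T$ with $\vec{u}, \vec{v} \in \RR^n$ and $w \in \RR$, and expand the eigenvalue equation $\mat{M}(\vec{u}^T, \vec{v}^T, w)^T = \lambda(\vec{u}^T, \vec{v}^T, w)^T$ one block row at a time. Reading off the three blocks of $\mat{M}$ in \eqref{eq:eig_M} gives the relations $(\lambda\mat{I}-\mat{D})\vec{u} = -\diag(\vec{b})\vec{v}$, $(\lambda\mat{I}-\mat{D})\vec{v} = -\vec{b}\,w$, and $\vec{1}^T\vec{u} = \lambda w$.

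Since $\lambda\mat{I}-\mat{D}$ is invertible by hypothesis, the second relation forces $\vec{v} = -(\lambda\mat{I}-\mat{D})^{-1}\vec{b}\,w = \vec{y}\,w$, with $\vec{y}$ exactly as in the statement. Because $\mat{D}$ is diagonal, I can read off $y_k = -b_k/(\lambda - D_{kk})$ componentwise; substituting $\vec{v} = \vec{y}w$ into the first relation and again using invertibility shows, after a short elementwise computation, that $u_k = b_k^2/(\lambda - D_{kk})^2\,w = y_k^2\,w$, i.e. $\vec{u} = \vec{y}^2\,w$. Hence every eigenvector for $\lambda$ is a scalar multiple of $((\vec{y}^2)^T, \vec{y}^T, 1)^T$, which is precisely the form appearing in \eqref{eq:eig_M}. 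Nonvanishing of the eigenvector then forces $w \neq 0$ (otherwise $\vec{u} = \vec{v} = \vec{0}$ as well), so the third relation $\vec{1}^T\vec{u} = \lambda w$ may be divided by $w$ to yield $\lambda = \vec{1}^T\vec{y}^2 = \vec{y}^T\vec{y}$. Extracting this scalar identity $\lambda = \vec{y}^T\vec{y}$ is the crux of the argument and is the step I expect to demand the most care, since it is exactly here that $\lambda$ being an eigenvalue of $\mat{M}$ (and not an arbitrary scalar) is used — indeed the verification $\vec{u} = \vec{y}^2 w$ is what links the last block row back to the squared coordinates.

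With $\lambda = \vec{y}^T\vec{y}$ established, stationarity is immediate: the defining identity $(\lambda\mat{I}-\mat{D})\vec{y} = -\vec{b}$ together with $\lambda = \vec{y}^T\vec{y}$ gives, via \eqref{eq:diff_Db}, $\nabla f(\vec{y}) = (\vec{y}^T\vec{y})\vec{y} - \mat{D}\vec{y} + \vec{b} = \lambda\vec{y} - \mat{D}\vec{y} + \vec{b} = \vec{0}$. For uniqueness I would run this computation in reverse: if $\vec{z}$ is any stationary point with $\vec{z}^T\vec{z} = \lambda$, then the optimality condition \eqref{eq:first_order_opt} reads $(\lambda\mat{I}-\mat{D})\vec{z} = -\vec{b}$, and invertibility of $\lambda\mat{I}-\mat{D}$ forces $\vec{z} = -(\lambda\mat{I}-\mat{D})^{-1}\vec{b} = \vec{y}$. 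Thus $\vec{y}$ is the unique stationary point lying on the level set $\vec{y}^T\vec{y} = \lambda$, which completes the proof.
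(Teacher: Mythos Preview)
Your proposal is correct and follows essentially the same approach as the paper's proof: both take a generic eigenvector, split it into three blocks, use invertibility of $\lambda\mat{I}-\mat{D}$ to show the last block component is nonzero and to solve the middle and top blocks, read off $\lambda=\vec{y}^T\vec{y}$ from the last row, and then conclude stationarity. Your uniqueness argument (reversing the first-order condition) is slightly more direct than the paper's appeal to uniqueness of the eigenvector, but the two are equivalent once one notices that the eigenspace for $\lambda$ is one-dimensional under the full-rank hypothesis.
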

\begin{proof}
	Let $\vec{v} = (\vec{v}_1^T, \vec{v}_2^T, v_3)^T$ be an eigenvector corresponding to $\lambda$. If $v_3 = 0$, then, from the definition of $\mat{M}$, $(\lambda \mat{I} - \mat{D})\vec{v}_2 = 0 \Rightarrow \vec{v}_2 = 0 \Rightarrow (\lambda \mat{I} - \mat{D})\vec{v}_1 = 0 \Rightarrow \vec{v}_1 = 0 \Rightarrow \vec{v} = 0$, a contradiction. Consequently, we can w.l.o.g. assume $v_3 = 1$. Then $\vec{v}_2 = -(\lambda \mat{I} - \mat{D})^{-1} \vec{b}$ and $\vec{v}_1 = -(\lambda \mat{I} - \mat{D})^{-1}\diag(\vec{b})\vec{v}_2 = \vec{v}_2^2$. From the last row of $\mat{M}$ we get $\lambda = \vec{v}_2^T\vec{v}_2$, and  the eigenpair $(\lambda, \vec{v})$ is on the form in \eqref{eq:eig_M} with $\vec{y} = \vec{v}_2$. Consequently, $\vec{y}$ is a stationary point of $f(\vec{y})$ satisfying $\lambda = \vec{y}^T\vec{y}$. Uniqueness follows from the fact that $\vec{v}$ is the unique eigenvector corresponding to $\lambda$.
\end{proof}

The case when $\lambda \mat{I} - \mat{D}$ is singular we denote as a \emph{degenerate case}. As we will see, this corresponds to when the trilateration problem is underdefined and has multiple, possibly infinitely many, solutions.
Let $\mat{A}^+$ denote the Moore-Penrose pseudo inverse of $\mat{A}$.

\begin{proposition}
	\label{prop:stationary_point_singular}
	If $\lambda$ is an eigenvalue of $\mat{M}$, $\lambda \mat{I} - \mat{D}$ is singular, and $(\lambda \mat{I} - \mat{D})\vec{y} = -\vec{b}$ has a solution, then $\vec{y} = \vec{y}_p + \vec{y}_h$ is a stationary point, where $\vec{y}_p = -(\lambda \mat{I} - \mat{D})^+\vec{b}$ and $\vec{y}_h \in \ker(\lambda \mat{I} - \mat{D})$ such that $\vec{y}_h^T\vec{y}_h = \lambda - \vec{y}_p^T\vec{y}_p$.
\end{proposition}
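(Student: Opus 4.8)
The plan is to verify directly that the proposed $\vec{y}$ satisfies the first-order condition \eqref{eq:diff_Db}, namely $(\vec{y}^T\vec{y})\vec{y} - \mat{D}\vec{y} + \vec{b} = \vec{0}$. The idea is to treat $\lambda$ as a fixed scalar and establish two facts: (a) that $\vec{y} = \vec{y}_p + \vec{y}_h$ solves the linear system $(\lambda \mat{I} - \mat{D})\vec{y} = -\vec{b}$, and (b) that its squared norm is exactly $\vec{y}^T\vec{y} = \lambda$. Once both hold, substituting $\vec{y}^T\vec{y} = \lambda$ into \eqref{eq:diff_Db} turns the gradient into $(\lambda \mat{I} - \mat{D})\vec{y} + \vec{b}$, which vanishes by (a); hence $\vec{y}$ is a stationary point. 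So the whole argument reduces to checking (a) and (b).

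For the linear part (a), I would invoke the hypothesis that $(\lambda \mat{I} - \mat{D})\vec{y} = -\vec{b}$ is consistent. The standard theory of the Moore--Penrose pseudoinverse then describes its full solution set as $\{\vec{y}_p + \vec{z} : \vec{z} \in \ker(\lambda \mat{I} - \mat{D})\}$, where $\vec{y}_p = -(\lambda \mat{I} - \mat{D})^+\vec{b}$ is the minimum-norm particular solution. Since $\vec{y}_h \in \ker(\lambda \mat{I} - \mat{D})$ we have $(\lambda \mat{I} - \mat{D})\vec{y}_h = \vec{0}$, and therefore $(\lambda \mat{I} - \mat{D})(\vec{y}_p + \vec{y}_h) = -\vec{b}$, which gives (a).

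The crux is the norm computation in (b), and the key observation is that $\lambda \mat{I} - \mat{D}$ is symmetric, so its range coincides with its row space and is the orthogonal complement of its kernel. A defining property of the pseudoinverse is that $\vec{y}_p = -(\lambda \mat{I} - \mat{D})^+\vec{b}$ lies in this range, hence $\vec{y}_p \perp \vec{y}_h$ and the cross term drops out of the expansion:
\[
\vec{y}^T\vec{y} = \vec{y}_p^T\vec{y}_p + 2\,\vec{y}_p^T\vec{y}_h + \vec{y}_h^T\vec{y}_h = \vec{y}_p^T\vec{y}_p + \vec{y}_h^T\vec{y}_h.
\]
Imposing the stated condition $\vec{y}_h^T\vec{y}_h = \lambda - \vec{y}_p^T\vec{y}_p$ then gives $\vec{y}^T\vec{y} = \lambda$, which is (b), completing the verification. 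I expect the orthogonality of $\vec{y}_p$ and $\vec{y}_h$ to be the one point needing care, since it is precisely what makes the squared norm split additively and lets the imposed condition pin $\vec{y}^T\vec{y}$ to $\lambda$. I would also note in passing that a real $\vec{y}_h$ exists only when $\lambda \geq \vec{y}_p^T\vec{y}_p$, and that the nontrivial kernel of $\lambda \mat{I} - \mat{D}$ (which is exactly what the singular, degenerate case provides) is the source of the extra degrees of freedom corresponding to the multiple, possibly infinitely many, solutions.
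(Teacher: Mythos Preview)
Your proposal is correct and follows essentially the same approach as the paper's own proof: use the pseudoinverse property to obtain $\vec{y}_p^T\vec{y}_h = 0$, conclude $\vec{y}^T\vec{y} = \lambda$ from the imposed norm condition on $\vec{y}_h$, and then observe that $(\lambda \mat{I} - \mat{D})\vec{y} = -\vec{b}$ is equivalent to $\nabla f(\vec{y}) = \vec{0}$. Your write-up is simply a more detailed and explicit version of the paper's terse two-sentence argument.
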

\begin{proof}
    Due to the properties of the pseudo inverse $\vec{y}_p^T\vec{y}_h = 0$, and the last constraint on $\vec{y}_h$ then ensures that $\lambda = \vec{y}^T\vec{y}$.
    The fact that $\vec{y}$ satisfies $(\lambda \mat{I} - \mat{D})\vec{y} = -\vec{b}$ is then equivalent to $\nabla f(\vec{y}) = 0$, and $\vec{y}$ is a stationary point.
\end{proof}

The geometric interpretation of Proposition~\ref{prop:stationary_point_singular} is that, provided $\lambda$ is real and $\lambda - \vec{y}_p^T\vec{y}_p \geq 0$, the stationary points satisfying $\lambda = \vec{y}^T\vec{y}$ consists of a hypersphere centered at $\vec{y}_p$ with radius $\sqrt{\lambda - \vec{y}_p^T\vec{y}_p}$.
Although embedded in $\RR^n$, the dimension of the hypersphere is given by $n - 1 - \rank(\lambda \mat{I} - \mat{D})$.
In particular, when $\lambda \mat{I} - \mat{D}$ has rank $n-1$ there is still a finite number of solutions (two).
Strictly speaking, there is also a finite number of solutions (one) when $\lambda = \vec{y}_p^T\vec{y}_p$ regardless of the rank.

\figurename~\ref{fig:degenerate} shows two degenerate cases resulting from the senders not spanning the space. These types of sender configurations will result in multiple global minima for any cost being a function of $\| \vec{x} - \vec{s}_j \|$, e.g., \eqref{eq:ls}, \eqref{eq:sls}, and $h_0(\vec{x})$, unless a global minimum exists in the affine subspace spanned by the senders.
In \figurename~\ref{fig:degenerate_3d}, we have $\rank(\lambda \mat{I} - \mat{D}) = n-2 = 1$, and this rank persists if $\vec{x}$ becomes collinear with the senders, i.e., we have the scenario described above where $\lambda = \vec{y}_p^T\vec{y}_p$, and a single unique solution exists.

A different type of degenerate case is shown in \figurename~\ref{fig:degenerate_2d_2}, where the senders indeed span the space. Here, \eqref{eq:ls} has a finite number of solutions while \eqref{eq:sls} has infinitely many. Given the slightly shorter distance measurements $d_j=1.5$ for $j = 1, \dotsc, 4$, \eqref{eq:ls} has a unique solution at the origin, while the solutions to \eqref{eq:sls} still consist of a circle, this time with radius $0.5$. Consequently, there is no guarantee that the approximate cost $h(\vec{x})$ and the original cost $h_0(\vec{x})$ have the same number of solutions or even both finitely many.

Note that Propositions~\ref{prop:stationary_point} and \ref{prop:stationary_point_singular} hold for complex eigenvalues and complex solutions to $\nabla f(\vec{y}) = \vec{0}$.
However, in this work, we are only interested in the real solutions and thus only need to consider the real eigenvalues of $\mat{M}$.

\begin{figure}
	\centering
	\begin{tikzpicture}[
		dot/.style = {circle, fill, minimum size=#1,
			inner sep=0pt, outer sep=0pt},
		dot/.default = 4pt,  
        square/.style = {draw, minimum size=#1,
			inner sep=0pt, outer sep=0pt},
		square/.default = 4pt,
		]
        
		\node[draw, circle, very thick, color=accentcol, minimum size=3.4cm, inner sep=0pt, outer sep=0pt] at (0,0) {};
		\node[dot=3pt] at (0,0) {};
		\node[dot, label={[right]$\vec{s}_1$}] (s1) at (2,0) {};
		\node[dot, label={[right]$\vec{s}_2$}] (s2) at (0,2) {};
		\node[dot, label={[left]$\vec{s}_3$}] (s3) at (-2,0) {};
		\node[dot, label={[right]$\vec{s}_4$}] (s4) at (0,-2) {};
		\node[square] at (1.8646,1.8646) {};
		\node[square] at (1.8646,-1.8646) {};
		\node[square] at (-1.8646,1.8646) {};
		\node[square] at (-1.8646,-1.8646) {};
	\end{tikzpicture}
	\caption{Degenerate case in the plane occurring when the senders are evenly distributed on the unit circle and the distance measurements are $d_j = 1.65$ for $j=1,\dotsc,4$. The blue circle with radius $0.85$ indicates the solutions to \eqref{eq:sls}, while the four squares indicate the solutions to \eqref{eq:ls}.
	}
	\label{fig:degenerate_2d_2}
\end{figure}

\subsection{Finding the Global Minimizer}
\label{sec:global_minimizer}
The global minimizer of $f(\vec{y})$ can be found by enumerating all stationary points and evaluating the cost. However, this is unnecessary, as the following two propositions show that the global minimum corresponds to the largest real eigenvalue of $\mat{M}$.

\begin{proposition}
	\label{prop:global_min}
	The point $\vec{y}$ is a global minimizer of $f(\vec{y})$ if and only if $\nabla f(\vec{y}) = 0$ and $\vec{y}^T\vec{y} \geq D_{11}$. If $\vec{y}^T\vec{y} > D_{11}$, then $\vec{y}$ is the unique global minimizer of $f(\vec{y})$.
\end{proposition}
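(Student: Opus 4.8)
The plan is to exploit the close analogy with the global optimality conditions for the (generalized) trust-region subproblem. First I would recover an explicit expression for the objective itself: integrating the gradient in \eqref{eq:diff_Db} shows that, up to an additive constant that does not affect the minimizers,
\[
	f(\vec{y}) = \tfrac{1}{4}(\vec{y}^T\vec{y})^2 - \tfrac{1}{2}\vec{y}^T\mat{D}\vec{y} + \vec{b}^T\vec{y}.
\]
Writing $\lambda = \vec{y}^T\vec{y}$, the stationarity equation $\nabla f(\vec{y}) = \vec{0}$ reads $(\lambda\mat{I}-\mat{D})\vec{y} = -\vec{b}$, and the norm condition $\vec{y}^T\vec{y} \geq D_{11}$ is exactly the statement that $\mat{S} := \lambda\mat{I}-\mat{D} \succeq 0$, since $D_{11}$ is the largest diagonal entry of $\mat{D}$. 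Recognizing these as the familiar secular conditions $\mat{S}\succeq 0$ together with $\mat{S}\vec{y}=-\vec{b}$ is the conceptual core of the argument.

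For the sufficiency direction I would establish a completing-the-square identity. Let $\vec{y}^*$ be stationary with $\lambda^* := (\vec{y}^*)^T\vec{y}^* \geq D_{11}$, so that $\mat{S} = \lambda^*\mat{I}-\mat{D}\succeq 0$ and $\vec{b} = -\mat{S}\vec{y}^*$. Substituting $\mat{D} = \lambda^*\mat{I}-\mat{S}$ and $\vec{b} = -\mat{S}\vec{y}^*$ into $f$ and grouping terms should yield, for every $\vec{y}$,
\[
	f(\vec{y}) - f(\vec{y}^*) = \tfrac{1}{4}\bigl((\vec{y}^T\vec{y}) - \lambda^*\bigr)^2 + \tfrac{1}{2}(\vec{y}-\vec{y}^*)^T\mat{S}(\vec{y}-\vec{y}^*).
\]
Both terms are nonnegative---the first trivially, the second because $\mat{S}\succeq 0$---so $\vec{y}^*$ is a global minimizer. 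If moreover $\lambda^* > D_{11}$, then $\mat{S}\succ 0$ and equality forces $\vec{y}=\vec{y}^*$, which gives the claimed uniqueness.

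For the necessity direction, that a global minimizer $\vec{y}^*$ is stationary is just the first-order condition, so the substance is proving $\lambda^* := (\vec{y}^*)^T\vec{y}^*\geq D_{11}$. This is the main obstacle: the second-order condition only gives $\mat{S}+2\vec{y}^*(\vec{y}^*)^T\succeq 0$, which is strictly weaker than $\mat{S}\succeq 0$ and does not by itself exclude $\lambda^* < D_{11}$. My plan is therefore to invoke \emph{global} (not merely local) minimality through a norm-preserving perturbation. Assuming for contradiction that $\lambda^* < D_{11}$, the leading diagonal entry of $\mat{S}$ is negative. If $y_1^*\neq 0$, I would compare $\vec{y}^*$ with its reflection $\vec{y}' = \vec{y}^* - 2y_1^*\vec{e}_1$, which has the same norm and the same value of $\vec{y}^T\mat{D}\vec{y}$; stationarity gives $b_1 = (D_{11}-\lambda^*)y_1^*$, and a short computation yields $f(\vec{y}') - f(\vec{y}^*) = -2b_1 y_1^* = -2(D_{11}-\lambda^*)(y_1^*)^2 < 0$, contradicting optimality. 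In the remaining case $y_1^* = 0$, which forces $b_1 = 0$, I would instead move along $\vec{e}_1$ and check that $f(\vec{y}^* + t\vec{e}_1) - f(\vec{y}^*) = \tfrac{1}{2}(\lambda^*-D_{11})t^2 + \tfrac{1}{4}t^4$, which is negative for small $t\neq 0$, again a contradiction. Hence every global minimizer satisfies $\lambda^*\geq D_{11}$, completing the equivalence.
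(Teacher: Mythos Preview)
Your proof is correct and complete: the completing-the-square identity holds as you state it, and the two-case reflection/perturbation argument for necessity is clean and valid.

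However, your approach is genuinely different from the paper's. The paper does not argue directly: it reformulates the minimization of $f(\vec{y})$ as the generalized trust-region subproblem
\[
	\minimize_{\vec{y},\lambda}\ \tfrac{1}{4}\lambda^2 - \tfrac{1}{2}\vec{y}^T\mat{D}\vec{y}+\vec{b}^T\vec{y}
	\quad\text{subject to}\quad \tfrac{1}{2}(\vec{y}^T\vec{y}-\lambda)=0,
\]
writes down the Lagrangian, and then invokes Theorems~3.2 and~4.1 of Mor\'e's paper on generalizations of the trust-region subproblem to obtain both the characterization and the uniqueness statement. The KKT conditions together with $\nabla^2_{\vec{y},\lambda}\mathcal{L}\succeq 0$ reduce exactly to $\nabla f(\vec{y})=0$ and $\vec{y}^T\vec{y}\geq D_{11}$. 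What the paper buys is brevity and an explicit link to the GTRS literature that frames the rest of the section; what your approach buys is self-containment---you never leave the page, and in particular your explicit identity $f(\vec{y})-f(\vec{y}^*)=\tfrac14(\vec{y}^T\vec{y}-\lambda^*)^2+\tfrac12(\vec{y}-\vec{y}^*)^T\mat{S}(\vec{y}-\vec{y}^*)$ makes the structure of the minimizer set in the degenerate case $\mat{S}\succeq 0$, $\mat{S}\not\succ 0$ transparent without appeal to an external theorem.
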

\begin{proof}
	Minimizing $f(\vec{y})$ is equivalent to the generalized trust region subproblem
	\begin{align}
		\minimize_{\vec{y},\lambda} \quad & \frac{1}{4} \lambda^2 - \frac{1}{2} \vec{y}^T\mat{D}\vec{y}+\vec{b}^T\vec{y}, \\
		\text{subject to} \quad &  \frac{1}{2} (\vec{y}^T\vec{y}-\lambda) = 0, \notag
	\end{align}
	where the Lagrangian is given by
	\begin{equation}
		\mathcal{L}(\vec{y},\lambda,\nu) = \frac{1}{4} \lambda^2 - \frac{1}{2} \vec{y}^T\mat{D}\vec{y}+\vec{b}^T\vec{y} + \frac{1}{2} \nu (\vec{y}^T\vec{y}-\lambda),
	\end{equation}
	and $\nu$ is the Lagrange multiplier.
	By \cite[Theorem 3.2]{more_generalizations_1993}, $(\vec{y},\lambda)$ is a global minimizer if and only if it for some $\nu$ satisfies
	\begin{align}
		\nabla_\vec{y}\mathcal{L}(\vec{y},\lambda,\nu) &= -\mat{D}\vec{y}+\vec{b} + \nu \vec{y} = 0, \\
		\nabla_\lambda\mathcal{L}(\vec{y},\lambda,\nu) &= \frac{1}{2}(\lambda - \nu) = 0, \\
		\nabla_{\vec{y},\lambda}^2 \mathcal{L}(\vec{y},\lambda,\nu) &\succeq 0
		\quad \Leftrightarrow \quad \nu\mat{I}-\mat{D} \succeq 0,
	\end{align}
	and $\lambda = \vec{y}^T\vec{y}$, which is equivalent to $\nabla f(\vec{y}) = 0$ and $\lambda = \vec{y}^T\vec{y} \geq D_{11}$.
	Uniqueness of the global minimizer is given by \cite[Theorem 4.1]{more_generalizations_1993} when $\nabla_{\vec{y},\lambda}^2 \mathcal{L}(\vec{y},\lambda,\nu) \succ 0 \Leftrightarrow \vec{y}^T\vec{y} > D_{11}$.
\end{proof}

\begin{proposition}
	\label{prop:largest_real}
	For any global minimizer $\vec{y}$ of $f(\vec{y})$, we have $\lambda_{\text{max}} = \vec{y}^T\vec{y}$, where $\lambda_{\text{max}}$ is the largest real eigenvalue of $\mat{M}$.
\end{proposition}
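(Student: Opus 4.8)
The plan is to combine the two preceding propositions with the eigenvalue characterization already established. The characterization in Proposition~\ref{prop:global_min} tells us that a global minimizer $\vec{y}$ is a stationary point with $\lambda^{*} := \vec{y}^T\vec{y} \geq D_{11}$; since a minimizer is real, $\lambda^{*}$ is a nonnegative real number, and by the construction in \eqref{eq:eig_M} the pair $(\lambda^{*}, (\vec{y}^2,\vec{y},1)^T)$ is an eigenpair of $\mat{M}$, so $\lambda^{*}$ is a real eigenvalue. Hence $\lambda^{*} \leq \lambda_{\text{max}}$ holds automatically, and the entire content of the statement reduces to the reverse inequality: no real eigenvalue of $\mat{M}$ exceeds $\lambda^{*}$.

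First I would argue by contradiction, assuming a real eigenvalue $\mu > \lambda^{*}$ exists. Because $\mu > \lambda^{*} \geq D_{11} \geq D_{kk}$ for every $k$, the matrix $\mu\mat{I} - \mat{D}$ is strictly positive definite and in particular nonsingular, so the degenerate situation of Proposition~\ref{prop:stationary_point_singular} cannot arise and Proposition~\ref{prop:stationary_point} applies directly. This supplies the stationary point $\vec{y}' = -(\mu\mat{I} - \mat{D})^{-1}\vec{b}$ satisfying $(\vec{y}')^T\vec{y}' = \mu$.

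Then I would invoke Proposition~\ref{prop:global_min}: since $\nabla f(\vec{y}') = 0$ and $(\vec{y}')^T\vec{y}' = \mu > D_{11}$, both the sufficiency and the uniqueness clauses identify $\vec{y}'$ as the \emph{unique} global minimizer of $f$. But $\vec{y}$ is also a global minimizer, which forces $\vec{y} = \vec{y}'$ and hence $\lambda^{*} = \vec{y}^T\vec{y} = \mu$, contradicting $\mu > \lambda^{*}$. This rules out any real eigenvalue above $\lambda^{*}$, and together with $\lambda^{*} \leq \lambda_{\text{max}}$ gives $\lambda_{\text{max}} = \lambda^{*} = \vec{y}^T\vec{y}$.

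The argument is short because the heavy lifting is done by the earlier results; the only delicate point is the logical pivot in the third step. The uniqueness clause of Proposition~\ref{prop:global_min} is what converts ``$\vec{y}'$ is a global minimizer with a strictly larger squared norm'' into the flat contradiction $\vec{y}' = \vec{y}$. I expect the main obstacle to be ensuring the \emph{strict} inequality $\mu > D_{11}$ is available, so that uniqueness, and not merely optimality, is guaranteed; this is precisely what the assumed chain $\mu > \lambda^{*} \geq D_{11}$ provides, and it is also what rules out the singular (degenerate) branch so that Proposition~\ref{prop:stationary_point} can be applied without caveat.
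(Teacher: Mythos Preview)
Your proof is correct and follows essentially the same approach as the paper: both arguments hinge on applying Proposition~\ref{prop:stationary_point} to a real eigenvalue strictly larger than $D_{11}$ and then invoking the uniqueness clause of Proposition~\ref{prop:global_min}. The only cosmetic difference is that the paper performs a case split on whether $\lambda_{\text{max}} > D_{11}$ or $\lambda_{\text{max}} = D_{11}$, whereas your contradiction argument (with $\mu > \lambda^{*} \geq D_{11}$) folds both cases into one stroke.
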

\begin{proof}
   	From the definition of $f(\vec{y})$, a global minimizer $\vec{y}$ must exist,
    and from Proposition III.3, $\vec{y}^T\vec{y} \geq D_{11}$. Since $\vec{y}^T\vec{y}$ is an eigenvalue of $\mat{M}$, we have $\lambda_{\text{max}} \geq D_{11}$.
	
	If $\lambda_{\text{max}} > D_{11}$, then $\vec{y} = -(\lambda_{\text{max}} \mat{I} - \mat{D})^{-1} \vec{b}$ is a stationary point satisfying $\lambda_{\text{max}} = \vec{y}^T\vec{y}$ by Proposition~\ref{prop:stationary_point} and a unique global minimizer by Proposition~\ref{prop:global_min}.
	
	If $\lambda_{\text{max}} = D_{11}$, then clearly any global minimizer $\vec{y}$ satisfies $\lambda_{\text{max}} = \vec{y}^T\vec{y}$, or there would exist a real eigenvalue larger than $\lambda_{\text{max}}$.
\end{proof}

When solving trilateration problems, we are often working in 2D or 3D space, and $\mat{M}$ will have size $5 \times 5$ or $7 \times 7$, respectively. For such small problems, it is computationally cheap to calculate all eigenvalues, even though Proposition~\ref{prop:largest_real} has shown that we are only interested in the largest real eigenvalue. Nevertheless, there are efficient algorithms for finding the eigenvalue with the largest real part, i.e., the rightmost eigenvalue \cite{arnoldi_principle_1951, lehoucq_arpack_1998}, but for these to be applicable, we first need to show that the largest real eigenvalue also is the rightmost one.

\begin{proposition}[{cf. \cite{adachi_solving_2017}[Theorem 3.4]}]
	\label{prop:rightmost}
	The rightmost eigenvalue of $\mat{M}$ is real.
\end{proposition}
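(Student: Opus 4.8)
The plan is to reduce the eigenproblem for $\mat{M}$ to a scalar \emph{secular equation}, and then show by a sign condition on its imaginary part that every genuinely complex eigenvalue has real part strictly below $D_{11}$, whereas a real eigenvalue of value at least $D_{11}$ is guaranteed to exist. Together these force the maximum real part to be attained by a real eigenvalue.

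First I would derive the secular equation directly from the eigenvector structure, reusing the case analysis in the proof of Proposition~\ref{prop:stationary_point}. Let $\lambda$ be an eigenvalue of $\mat{M}$ with eigenvector $\vec{v} = (\vec{v}_1^T, \vec{v}_2^T, v_3)^T$. Whenever $\lambda\mat{I}-\mat{D}$ is invertible, the exact argument from Proposition~\ref{prop:stationary_point} shows $v_3 \neq 0$; normalizing $v_3 = 1$ gives $\vec{v}_2 = -(\lambda\mat{I}-\mat{D})^{-1}\vec{b}$ and $\vec{v}_1 = (\lambda\mat{I}-\mat{D})^{-1}\diag(\vec{b})(\lambda\mat{I}-\mat{D})^{-1}\vec{b}$, whose $k$th entry is $b_k^2/(\lambda-D_{kk})^2$. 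Substituting into the last row $\vec{1}^T\vec{v}_1 = \lambda v_3$ yields
\begin{equation}
	\lambda = \sum_{k=1}^n \frac{b_k^2}{(\lambda - D_{kk})^2}.
\end{equation}
Since the $D_{kk}$ are real, any non-real eigenvalue $\lambda$ automatically satisfies $\lambda \neq D_{kk}$ for all $k$, so $\lambda\mat{I}-\mat{D}$ is invertible and the displayed equation applies to it.

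Next I would take the imaginary part. Writing $\lambda = \sigma + i\tau$ with $\tau \neq 0$ and $z_k = (\sigma - D_{kk}) + i\tau$, a short calculation gives $\Im(z_k^{-2}) = -2(\sigma - D_{kk})\tau/|z_k|^4$. Matching imaginary parts in the secular equation and dividing by $\tau \neq 0$ produces
\begin{equation}
	\sum_{k=1}^n \frac{b_k^2\,(\sigma - D_{kk})}{|\lambda - D_{kk}|^4} = -\frac{1}{2} < 0.
\end{equation}
For the left-hand side to be negative, at least one term with $b_k \neq 0$ must have $\sigma - D_{kk} < 0$, hence $\sigma < \max\{D_{kk} : b_k \neq 0\} \leq D_{11}$. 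Thus every non-real eigenvalue of $\mat{M}$ has real part strictly less than $D_{11}$. Finally, because $f(\vec{y})$ is coercive it attains a global minimizer $\vec{y}$, which by Propositions~\ref{prop:global_min} and \ref{prop:largest_real} corresponds to a real eigenvalue $\lambda_{\text{max}} = \vec{y}^T\vec{y} \geq D_{11}$. Combining the two facts, the largest real part over all eigenvalues is realized by a real eigenvalue (at least $D_{11}$) and strictly exceeds the real part of every complex eigenvalue, so the rightmost eigenvalue of $\mat{M}$ is real.

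I expect the decisive step to be the imaginary-part computation and its sign analysis; the derivation of the secular equation and the existence of a dominant real eigenvalue are essentially routine (the latter being inherited from the preceding optimization-theoretic propositions). The remaining care is bookkeeping around edge cases: excluding $v_3 = 0$ for complex eigenvalues (which mirrors Proposition~\ref{prop:stationary_point}), and checking that repeated diagonal entries of $\mat{D}$ or vanishing components $b_k$ do not invalidate the inequality $\sigma < D_{11}$, since the sign argument only requires a single summand with $b_k \neq 0$ and $D_{kk} > \sigma$.
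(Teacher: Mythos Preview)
Your proposal is correct and follows essentially the same route as the paper: derive the secular equation $\lambda = \sum_k b_k^2/(\lambda - D_{kk})^2$ from the eigenvector structure, take the imaginary part, and use the resulting sign condition together with the existence (via Proposition~\ref{prop:global_min}) of a real eigenvalue at least $D_{11}$. The paper packages the same computation as a contradiction (assume the rightmost eigenvalue is non-real with $\alpha \geq D_{11}$ and observe $\Im(b_k^2/(\lambda-D_{kk})^2)\leq 0$), whereas you argue directly that any non-real eigenvalue has $\sigma < D_{11}$; the underlying arithmetic is identical.
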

\begin{proof}
	Assume for contradiction that $\lambda = \alpha + \beta i$ is the rightmost eigenvalue of $\mat{M}$, where $\alpha, \beta \in \RR$, $\alpha \geq D_{11}$, $\beta \neq 0$ and $i$ is the imaginary unit. Since $\lambda = \alpha - \beta i$ also is an eigenvalue, we can assume $\beta > 0$. Then $\lambda I - \mat{D}$ has full rank and by Proposition~\ref{prop:stationary_point}
	\begin{equation}
		\lambda - \sum_{k=1}^n \frac{b_k^2}{(\lambda-D_{kk})^2} = \lambda - \vec{y}^T\vec{y} = 0.
	\end{equation}
	However, $\Im(b_k^2/(\lambda-D_{kk})^2) \leq 0$ for $k = 1, \dotsc, n$, implying $\Im(\lambda - \vec{y}^T\vec{y}) > 0$. This is a contradiction, and $\lambda$ cannot be an eigenvalue of $\mat{M}$.
\end{proof}

\subsection{Alternative Eigendecomposition}
\label{sec:alt_eig}
In the previous sections, we have worked with the matrix $\mat{D}$ found by diagonalizing $\mat{A}$. This diagonalization step can be mitigated by instead of $\mat{M}$ considering the eigenvalues of
\begin{equation}
	\label{eq:eig_Ma}
	\mat{M}_A =
	\begin{pmatrix}
		\mat{A} &  \mat{I} & \vec{0} \\
		\mat{O} & \mat{A} & -\vec{g} \\
		-\vec{g}^T & \vec{0}^T & 0 \\
	\end{pmatrix}.
\end{equation}
This matrix is similar to $\mat{M}_D = \mat{P}^{-1}\mat{M}_A\mat{P}$, where
\begin{equation}
	\label{eq:eig_Md}
	\mat{M}_D =
	\begin{pmatrix}
		\mat{D} &  \mat{I} & \vec{0} \\
		\mat{O} & \mat{D} & -\vec{b} \\
		-\vec{b}^T & \vec{0}^T & 0 \\
	\end{pmatrix}
    ,\quad
    \mat{P} =
	\begin{pmatrix}
		\mat{Q} & & \\
		& \mat{Q} & \\
		& & 1
	\end{pmatrix}.
\end{equation}
Furthermore, using, e.g., Laplace expansion, we can show that $\mat{M}$ and $\mat{M}_D$ have the same characteristic polynomial. Consequently, $\mat{M}$, $\mat{M}_D$, and $\mat{M}_A$ all share the same eigenvalues.
Because of this, the propositions from the previous sections now naturally transfer to this new formulation using $\mat{M}_A$.
In particular, if $\lambda_{\text{max}}$ is the largest real eigenvalue of $\mat{M}_A$ and $\lambda_{\text{max}} \mat{I} - \mat{A}$ has full rank, then the global minimizer of $h(\vec{x})$ is given by $\vec{x} = -(\lambda_{\text{max}} \mat{I} - \mat{A})^{-1} \vec{g}$.

It is worth noting that $\mat{M}_A$ closely resembles the matrix constructed in \cite{adachi_solving_2017} for solving the Trust-Region Subproblem (TRS). However, the problem considered here does not belong to that class of problems, and consequently, their method is not directly applicable.
In a later work \cite{adachi_eigenvalue-based_2019}, the method proposed in \cite{adachi_solving_2017} was extended to the GTRS. However, applying their method would require the introduction of an additional unknown corresponding to $\alpha$ in \eqref{eq:sls_constrained} and results in a $(2n+3) \times (2n+3)$ generalized eigenvalue problem, as opposed to the here proposed $(2n+1) \times (2n+1)$ ordinary eigenvalue problem.

\subsection{The Proposed Algorithm}
The results from the previous sections now admit an algorithm for finding the global minimizer of $h(\vec{x})$. The simplest version of the proposed method is shown in Algorithm~\ref{alg:proposed_simple}, but we will offer a few improvements resulting in the proposed method in Algorithm~\ref{alg:proposed}.

\begin{algorithm}[tb]
	\caption{Simplified Trilateration}
	\hspace*{\algorithmicindent} \textbf{Input:} Sender positions $\vec{s}_j$, distances $d_j$, weights $w_{ij}$ \\
	\hspace*{\algorithmicindent} \textbf{Output:} Receiver position $\vec{x}$ 
	\begin{algorithmic}[1]
		\State Normalize weights: $w_{ij} \leftarrow w_{ij} / \sum_{ij} w_{ij}$.
		\State Translate senders: $\vec{s}_j \leftarrow \vec{s}_j - \vec{t}$, where $\vec{t} = \sum_{ij} w_{ij}\vec{s}_i$.
        \State Calculate $\mat{A}$ and $\vec{g}$ in \eqref{eq:diff_Ag}.
		\State Find largest real eigenvalue $\lambda_\text{max}$ of $\mat{M}_A$ \eqref{eq:eig_Ma}.
		\State Find receiver position as $\vec{x} = -(\lambda_\text{max} \mat{I} - \mat{A})^{-1} \vec{g}$.
		\State Undo translation: $\vec{x} \leftarrow \vec{x} + \vec{t}$.
	\end{algorithmic}
	\label{alg:proposed_simple}
\end{algorithm}

\begin{algorithm}[tb]
	\caption{Trilateration}
	\hspace*{\algorithmicindent} \textbf{Input:} Sender positions $\vec{s}_j$, distances $d_j$, weights $w_{ij}$ \\
	\hspace*{\algorithmicindent} \textbf{Output:} Receiver position(s) $\vec{x}$ 
	\begin{algorithmic}[1]
		\State Normalize weights: $w_{ij} \leftarrow w_{ij} / \sum_{ij} w_{ij}$.
		\State Translate senders: $\vec{s}_j \leftarrow \vec{s}_j - \vec{t}$, where $\vec{t} = \sum_{ij} w_{ij}\vec{s}_i$.
        \State Calculate $\mat{D}$ and $\vec{b}$ in \eqref{eq:diff_Db}.
		\State Find largest real eigenvalue $\lambda_\text{max}$ of $\mat{M}$ \eqref{eq:eig_M}.
		\If {$\rank(\lambda_\text{max} \mat{I} - \mat{D}) = n$}
			\State Solve for $\vec{y}$ using \eqref{eq:y_degen_1}-\eqref{eq:y_degen_2} and choose the sign in \eqref{eq:y_degen_2} such that $\sgn(y_1) = -\sgn(b_1)$.
		\ElsIf {$\rank(\lambda_\text{max} \mat{I} - \mat{D}) = n-1$}
			\State Solve for the two solutions $\vec{y}_1$ and $\vec{y}_2$ using \eqref{eq:y_degen_1}-\eqref{eq:y_degen_2}.
		\Else
			\State The problem is ill-defined. Return nothing.
		\EndIf
		\State Undo rotation: $\vec{x} = \mat{Q}\vec{y}$.
		\State Undo translation: $\vec{x} \leftarrow \vec{x} + \vec{t}$.
	\end{algorithmic}
	\label{alg:proposed}
\end{algorithm}

While the formulation using $\mat{M}_A$ in Section~\ref{sec:alt_eig} avoids the eigendecomposition of $\mat{A}$, explicitly calculating $\mat{D}$ simplifies calculations related to the degenerate cases, e.g., it is trivial to find the rank, kernel, and Moore-Penrose pseudo inverse of $\lambda \mat{I} - \mat{D}$. Furthermore, we found that in our implementation the eigendecomposition of $\mat{M}$ is faster to calculate than that of $\mat{M}_A$, resulting in an overall faster solver.

Another issue with Algorithm~\ref{alg:proposed_simple} is that it does not handle the degenerate cases. When $\rank(\lambda_\text{max} \mat{I} - \mat{D}) < n-1$, there is almost always an infinite number of solutions, and the problem is, for our purposes, ill-defined.
However, when $\rank(\lambda_\text{max} \mat{I} - \mat{D}) = n-1$, there are two solutions given by (see Proposition~\ref{prop:stationary_point_singular})
\begin{align}
	\label{eq:y_degen_1}
	y_k &= - \frac{b_k}{\lambda_\text{max} - D_{kk}} \quad \text{for} \quad k = 2, \dotsc, n, \\
	\label{eq:y_degen_2}
	y_1 &= \pm \sqrt{\lambda_\text{max} - \sum_{k=2}^n y_k^2}.
\end{align}
If $\lambda_\text{max} \mat{I} - \mat{D}$ has full rank, a single solution is given by Proposition~\ref{prop:stationary_point}. However, when approaching a degenerate case the numerical stability of $\vec{y} = -(\lambda_\text{max} \mat{I} - \mat{D})^{-1} \vec{b}$ gets worse. To avoid this, we always treat the problem as if the rank is $n-1$ and use \eqref{eq:y_degen_1}-\eqref{eq:y_degen_2} also in the nondegenerate case. Only one of the two solutions is correct though, so we choose the sign in \eqref{eq:y_degen_2} such that $\sgn(y_1) = -\sgn(b_1)$. This approach yields better numerical stability when transitioning to and from degenerate cases.
The improved proposed method is summarized in Algorithm~\ref{alg:proposed}.
Note that the rank check on $\lambda_\text{max} \mat{I} - \mat{D}$ is only used to determine whether one, two, or no solutions should be returned.
One could easily modify the algorithm to never fail and always return one of the possibly many global minimizers.

\section{Experiments with Synthetic Data}
\label{sec:experiment_synth}
In this section, we compare the proposed methods in Algorithms~\ref{alg:proposed_simple} and \ref{alg:proposed} with several other methods from the literature \cite{zhou_closed-form_2011, beck_exact_2008, beck_iterative_2008, luke_simple_2017, ismailova_penalty_2016}. In addition, we include a linear approach which is the unconstrained version of \eqref{eq:sls_constrained}, and one method solving \eqref{eq:sls_constrained} using the generalized eigenvalue decomposition presented in \cite{adachi_eigenvalue-based_2019}. To solve the SDP in \cite{beck_exact_2008} we used Hypatia \cite{coey_solving_2022}, and for the CCP in \cite{ismailova_penalty_2016} we used Clarabel \cite{clarabel} initialized with the mean sender position. All methods have been implemented in Julia \cite{bezanzon_julia_2017}, and we share our code at the end of the paper.

It should be noted that some of these methods minimize the ML cost \eqref{eq:ls} and some minimize $h(\vec{x})$ with the weights $w_{jj} = 1 / 4d_j^2$, $w_{ij} = 0$ for $i \neq j$, as proposed in Section~\ref{sec:noise_distributions}. However, Zhou \cite{zhou_closed-form_2011} does not trivially allow for a similar weighting and is minimizing the unweighted cost \eqref{eq:sls}.

\subsection{Execution Time}
Comparing the execution time of the different methods is not trivial as it depends on several factors such as termination criteria, initialization, noise levels, number of senders, programming language, and implementation choices not specified in the papers. We have attempted to make the comparison fair by adjusting termination criteria where possible to produce errors in the order of $10^{-6}$. The proposed method has no such options and will yield errors in the order of $10^{-15}$.

To perform the tests, synthetic data were generated consisting of a single receiver $\vec{x} \in \RR^3$ and $m$ senders $\vec{s}_j \in \RR^3$ with coordinates drawn from the standard normal distribution $\mathcal{N}(0,1)$. The distance measurements were calculated without any added noise. A benchmark was set up where each solver was run \num{10000} times or for a maximum total of 10 seconds, each time with different synthetic data. The benchmark was run on an AMD Ryzen Threadripper 3990X, and the resulting median run times are shown in Table~\ref{tab:execution_time} for $m = 4, 10, 100$.

\sisetup{
	round-mode = figures,
	round-precision = 2,
}

\begin{table}
	\centering
	\caption{Execution time for several trilateration methods from the literature.}
	\label{tab:execution_time}
	\begin{tabular}{lrrr}
		\toprule
		Method & \multicolumn{3}{c}{Execution time} \\
		  & $m=4$ & $m=10$ & $m=100$ \\
		\midrule
Zhou \cite{zhou_closed-form_2011} & \SI{6.1}{\micro\second} & \SI{6.2}{\micro\second} & \SI{7.3}{\micro\second} \\
Linear & \SI{1.4}{\micro\second} & \SI{1.7}{\micro\second} & \SI{5.0}{\micro\second} \\
Beck SDR \cite{beck_exact_2008} & \SI{5.1212}{\milli\second} & \SI{14.5433}{\milli\second} & \SI{127.7241680}{\second} \\
Ismailova \cite{ismailova_penalty_2016} & \SI{13.6984}{\milli\second} & \SI{12.3945}{\milli\second} & \SI{35.7594}{\milli\second} \\
Adachi \cite{adachi_eigenvalue-based_2019} & \SI{36.7}{\micro\second} & \SI{39.5}{\micro\second} & \SI{42.4}{\micro\second} \\
Luke \cite{luke_simple_2017} & \SI{606.0}{\micro\second} & \SI{295.4}{\micro\second} & \SI{780.5}{\micro\second} \\
Beck SFP \cite{beck_iterative_2008} & \SI{571.9}{\micro\second} & \SI{384.0}{\micro\second} & \SI{2.7393}{\milli\second} \\
Beck SR-LS \cite{beck_exact_2008}& \SI{21.3}{\micro\second} & \SI{25.9}{\micro\second} & \SI{33.6}{\micro\second} \\
Proposed (Alg. 1) & \SI{30.8}{\micro\second} & \SI{31.8}{\micro\second} & \SI{35.3}{\micro\second} \\
Proposed (Alg. 2) & \SI{18.7}{\micro\second} & \SI{19.3}{\micro\second} & \SI{22.6}{\micro\second} \\
		\bottomrule
	\end{tabular}
\end{table}

\begin{figure*}
	\centering
	\includegraphics[width=\textwidth]{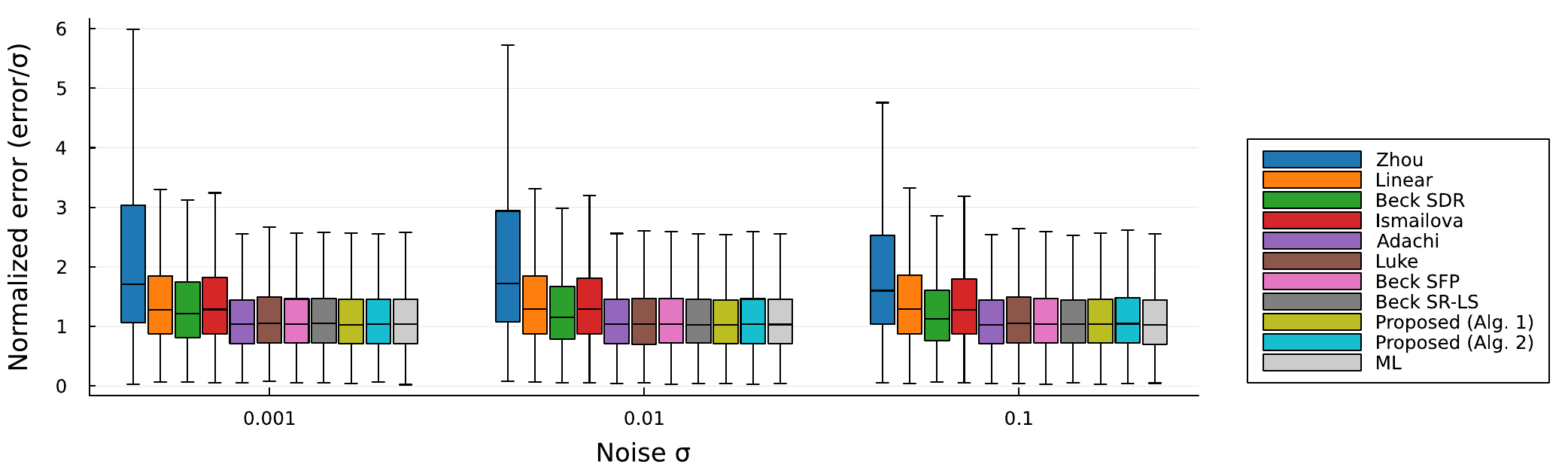}
	\caption{Normalized errors in estimated receiver position for several trilateration methods over various amounts of noise. The whiskers in the boxplot indicate the 1.5 IQR value and outliers are not shown.}
	\label{fig:synth_test}
\end{figure*}

As can be seen, the proposed method is competitive concerning execution time, being faster than all but the linear one and Zhou~\cite{zhou_closed-form_2011}. Note also, as mentioned before, that Algorithm~\ref{alg:proposed_simple} is not faster than Algorithm~\ref{alg:proposed} even though it avoids the eigendecomposition of $\mat{A}$. Luke \cite{luke_simple_2017} and Beck SFP \cite{beck_iterative_2008} are slower when $m=4$ compared to $m=10$. A possible explanation is that these iterative methods converge slower for near-degenerate cases, which are more likely to occur when $m=4$. They also become slower when $m=100$, which is expected as they use all sender positions in their inner iterations. In contrast to this, the proposed method quickly reduces the data to $\mat{A} \in \RR^{n \times n}$ and $\vec{g} \in \RR^n$, where the sizes only depend on the spatial dimension $n$ and not the number of senders $m$. Consequently, the proposed method maintains an almost constant execution time over the number of senders.
One way to speed up the iterative approaches is to initialize them with the linear solution. However, even for moderate noise, this initialization becomes sufficiently inaccurate for the solvers to remain slower than the proposed.
The convex relaxation methods, Beck SDR \cite{beck_exact_2008} and Ismailova \cite{ismailova_penalty_2016}, are slower by several orders of magnitude. The former scales horribly, possibly due to the number of unknowns growing quadratically with the number of senders.

\subsection{Gaussian Noise}
To evaluate the proposed method over various amounts of noise, a large set of synthetic datasets was constructed. The positions for $m=10$ senders and a single receiver in 3D space were sampled from the standard normal distribution $\mathcal{N}(\mat{0},\mat{I})$, after which distance measurements were calculated and subsequently perturbed by zero-mean Gaussian noise with a standard deviation of $\sigma \in \{0.001, 0.01, 0.1\}$. A total of \num{10000} datasets were constructed per $\sigma$.

\figurename~\ref{fig:synth_test} shows the errors in the estimated receiver positions normalized with the noise level $\sigma$ for a range of trilateration methods.
The maximum likelihood (ML) estimate, found using local optimization of \eqref{eq:ls}, initialized at the ground truth receiver position, is included for reference.
As can be seen, most of the methods performed similarly. For example, the mean error of the proposed algorithms is within \SI[round-precision=1]{1}{\percent} of the ML mean error. This illustrates that with suitable weights, the cost function in \eqref{eq:sls} provides a good approximation of \eqref{eq:ls}.
Zhou \cite{zhou_closed-form_2011} and the linear method performed worse. The poor performance of the former can be attributed to the assumption of $\sum_j \|\vec{x}-\vec{s}_j\|^2 = \sum_j d_j^2$ made, which is generally not true in the presence of noise. Not using the weighting in Section~\ref{sec:noise_distributions} contributes as well, but not to the same extent. Similarly, the linear method performs poorly because the neglected constraint $\alpha = \vec{x}^T\vec{x}$ in \eqref{eq:sls_constrained} is generally not satisfied with noisy measurements.
However, these methods may still perform well in low-noise situations and remain attractive due to being closed-form and fast. Contrary to this, Beck SDR~\cite{beck_exact_2008} and Ismailova~\cite{ismailova_penalty_2016} are less accurate than other methods while also being significantly slower.

\subsection{Degenerate Configurations}
\label{sec:degenerate_configurations}
Some of the methods in the literature do not handle degenerate cases and become numerically unstable as we approach such a scenario. In situations similar to that in \figurename~\ref{fig:degenerate_2d}, there are two possible solutions to the trilateration problem. Algorithm~\ref{alg:proposed} and Zhou~\cite{zhou_closed-form_2011} return two solutions in this case, while the remaining methods, if successful, only return one.

To investigate the numerical stability of the methods close to a degenerate case, we generate $m=6$ senders $\vec{s}_j \in \RR^n$ for $j = 1,\dotsc,6$ and a single receiver $\vec{x} \in \RR^n$ with coordinates sampled from $\mathcal{N}(0,1)$. However, the x-coordinate of each sender position is multiplied with a scaling factor. When this scaling factor approaches zero the senders become coplanar and a degenerate case occurs. The distance measurements are calculated without any added noise.

\begin{figure*}
\centering
	\begin{subfigure}{0.4\textwidth}
    	\includegraphics[scale=0.5]{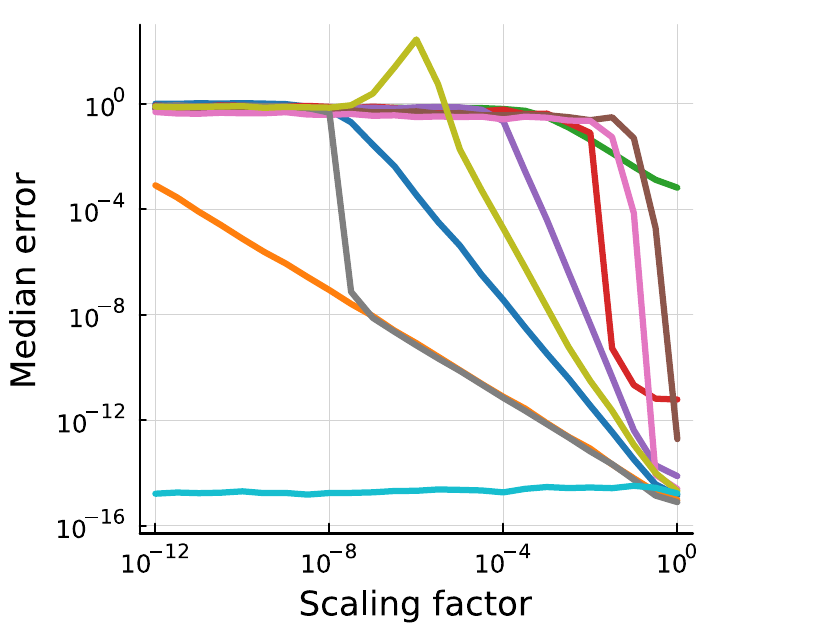}
    	\caption{}
    	\label{fig:degenerate_test_no_noise}
    \end{subfigure}
	\begin{subfigure}{0.4\textwidth}
    	\includegraphics[scale=0.5]{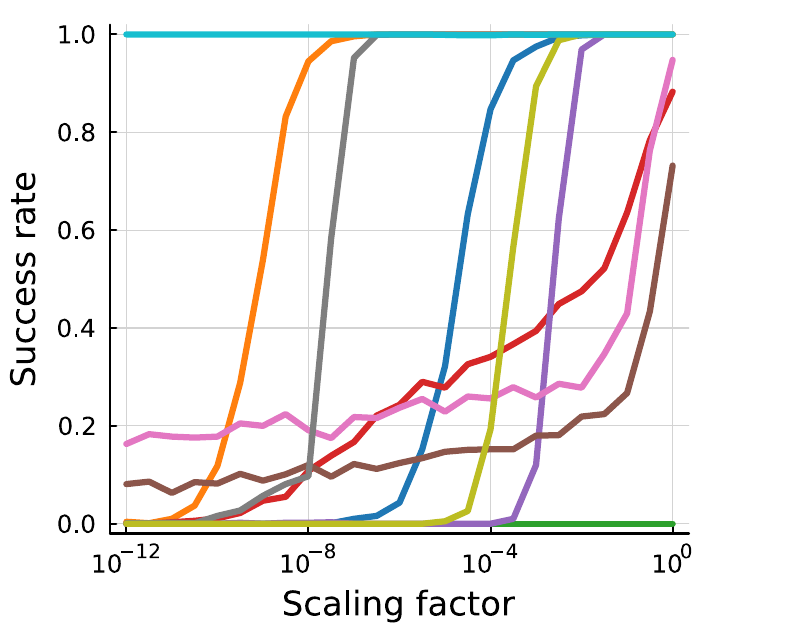}
    	\caption{}
    	\label{fig:degenerate_success_rate}
    \end{subfigure}
    \hspace{-5mm}
    \begin{subfigure}{0.15\textwidth}
    	\includegraphics[scale=0.5]{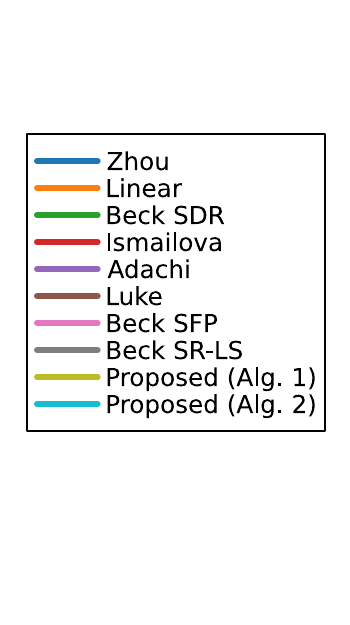}
    \end{subfigure}
    \caption{(a) Median error in estimated receiver position and (b) success rate over \num{1000} trials for a range of scaling factors. Success is defined as an error smaller than $10^{-6}$.
    The scaling factor is multiplied with the x-coordinate of each sender causing them to become coplanar when the factor approaches zero.
    }
    \label{fig:degenerate_test}
\end{figure*}

\figurename~\ref{fig:degenerate_test} shows the median error in the estimated receiver position and the success rate over \num{1000} trials for a range of scaling factors. Success is defined as an error smaller than $10^{-6}$, and if a method returns two solutions, the solution with the smallest error is used. As can be seen, Algorithm~\ref{alg:proposed} yields errors close to machine precision over the entire range of scaling factors. Due to the use of \eqref{eq:y_degen_1} and \eqref{eq:y_degen_2} also in the nondegenerate case, there is no obvious transition where the rank of $\lambda_\text{max} \mat{I} - \mat{D}$ changes.
All other methods performed notably worse.
In particular, Beck SR-LS~\cite{beck_exact_2008}, which is otherwise competitive concerning execution time and accuracy, gets progressively worse with smaller scaling factors. At a factor of $10^{-7}$, the so-called ``hard case'' of the GTRS occurs more often, which is not handled, and the method rapidly fails.
Adachi~\cite{adachi_eigenvalue-based_2019} is a general GTRS solver and should be able to handle the ``hard case''. However, the method is relatively complicated, so we omitted this in our implementation for simlicity. A complete implementation of the method should be expected to  accurately produce at least one solution.
This experiment also highlights a flaw in Zhou \cite{zhou_closed-form_2011}. In general, the method would perform similarly to Algorithm~\ref{alg:proposed} for this type of scenario. Unfortunately, it is sensitive specifically to the scaling in the x-coordinates done here.
Algorithm~\ref{alg:proposed_simple} performs poorly with a peak error of more than $10^2$ at the scaling factor $10^{-6}$. This peak is due to the eigensolver, and if $\mat{M}$ is used to find $\lambda_\text{max}$ instead of $\mat{M}_A$ the algorithm performs similarly to Zhou.
The iterative approaches Luke~\cite{luke_simple_2017}, Beck~SFP~\cite{beck_iterative_2008}, and Ismailova~\cite{ismailova_penalty_2016} 
would perform better with a different initialization, particularly one that produces two starting points corresponding to the two local minima of the cost function in this scenario. The proposed method does not require initialization and, consequently, no such design decisions are necessary.

\section{Experiments with Real Data}
\label{sec:experiment_real}

To demonstrate the practicality of the proposed algorithm, we will present a few results using RSS and Round-Trip Time (RTT) measurements from ten Wi-Fi access points in an office setting. Measurements of both types were gathered at 18 test positions (see \figurename~\ref{fig:real_rtt}) using an Android smartphone. The number of measurements $m$ at each position varies due to missing data and also depends on whether RSS, RTT, or both types of measurements are used. Consequently, a suitable reindexing of the senders $\vec{s}_j$ for $j = 1,\dotsc,m$ is needed.

The RSS is modeled using the log-distance path loss formula as in Section~\ref{sec:noise_distributions}, where we assume independent zero-mean Gaussian noise with standard deviation $\sigma_{RSS}=$ \SI{5}{\decibelmilliwatt}, i.e, $\epsilon_j \sim \mathcal{N}(0, 5)$ in \eqref{eq:log-distance_path_loss}. The parameters $(C_0)j$ and $\eta_j$ were estimated using linear least squares with measurements and positions from separate datasets.
The RSS measurements $C_j$ can then be converted to squared distance measurements $d_j^2$ using \eqref{eq:rss_distance_measurement}, and the weight matrix $\mat{W}$ becomes diagonal with elements
\begin{equation}
    w_{jj} = \left( \frac{5 \eta_j}{\sigma_{RSS} d_j^2 \log 10} \right)^2.
\end{equation}

RTT directly gives distance measurements $d_j$ similar to TOA, and we assume independent zero-mean Gaussian noise with standard deviation $\sigma_{RTT} =$ \SI{1}{\meter}, i.e., $\epsilon_j \sim \mathcal{N}(0, 1)$ in \eqref{eq:toa_model}. Similar to before, $\mat{W}$ becomes diagonal but with elements
\begin{equation}
    w_{jj} = \frac{1}{4 \sigma_{RTT}^2 d_j^2}.
\end{equation}

The 18 test positions were estimated using the proposed method (Alg.~\ref{alg:proposed}), with and without weighting, and a maximum likelihood estimate was found using local optimization initialized at the ground truth. The resulting mean errors for the different measurement types are shown in Table~\ref{tab:real_rtt_rss}.
As can be seen, the proposed method produced smaller errors than the ML estimate. 
This is not to be expected in general but is also not surprising for a small real dataset. The measurements could benefit the proposed method by chance and the noise might not be completely Gaussian.
Note also that the errors are larger in the unweighted case (when $\mat{W} = \mat{I}$). This is especially prominent when RSS measurements are used, highlighting the importance of suitable weights.
\figurename~\ref{fig:real_rtt} shows the estimated positions using only RTT with the weighting. The proposed and ML estimates are close compared to the error measured against the ground truth. Consequently, for this data, the noise has a greater impact on the positioning than the difference in the cost functions \eqref{eq:ls} and \eqref{eq:approx_cost}.

\begin{figure}
	\centering
	\includegraphics[width=\columnwidth]{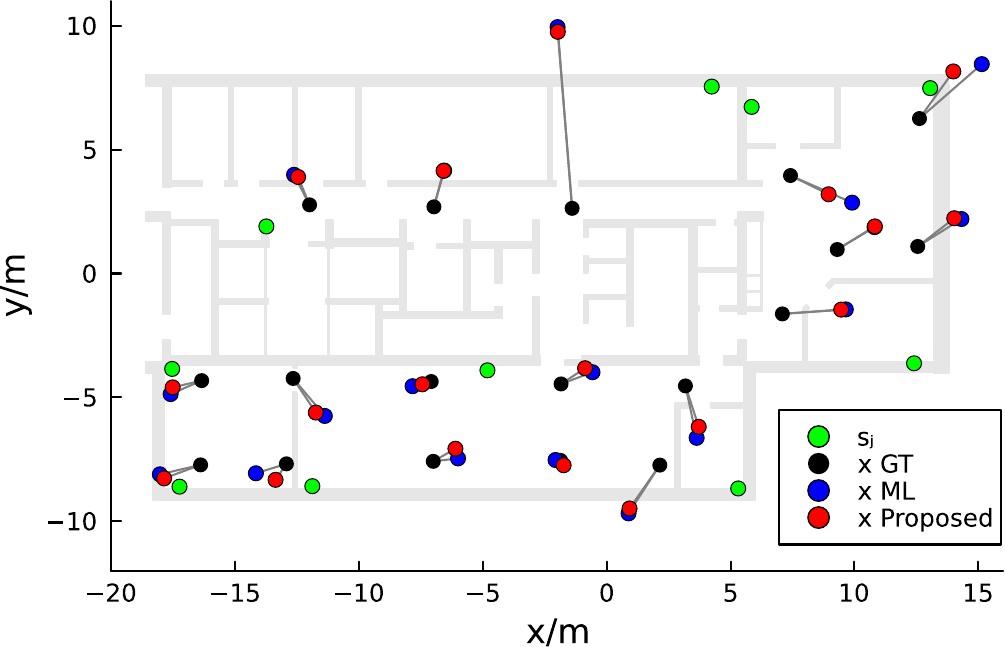}
	\caption{Estimated receiver positions $\vec{x}$ from the proposed method (red) and the ML estimate (blue) together with the ground truth positions (black) and ten senders (green) in an indoor office environment using Wi-Fi RTT measurements.}
	\label{fig:real_rtt}
\end{figure}

\begin{table}
	\centering
	\caption{Mean positioning error over 18 positions in an indoor office environment.}
	\label{tab:real_rtt_rss}
	\begin{tabular}{lccc}
		\toprule
		& Proposed & Proposed ($\mat{W}=\mat{I}$) & ML \\
		\midrule
		RTT     & \SI{1.7678}{\meter} & \SI{ 3.0386}{\meter} & \SI{2.0455}{\meter} \\
		RSS     & \SI{3.2663}{\meter} & \SI{16.7811}{\meter} & \SI{5.3340}{\meter} \\
		RTT+RSS & \SI{1.9395}{\meter} & \SI{11.6707}{\meter} & \SI{2.2386}{\meter} \\
		\bottomrule
	\end{tabular}
\end{table}

\section{Conclusion}
\label{sec:conclusion}
Trilateration is one of the fundamental problems in positioning and distance geometry with a wide range of applications. Still, we have presented a novel solution to the problem, not solely as a theoretical curiosity but as a competitive practical algorithm. Leveraging existing eigensolvers, the proposed method becomes fast, numerically stable, and easily implemented. The stability is further extended by the fact that we handle degenerate cases, scenarios that indeed occur in practice, especially when measurements are sparse.
The practicality and competitiveness of the proposed method have been demonstrated using synthetic and real experiments, where it is shown to be among the fastest and most accurate.

Trilateration is only one of the single-source localization problems, and future studies could investigate similar eigenvalue formulations for multilateration and triangulation, a work started in \cite{larsson_optimal_2019}. Fusing these problems into one common formulation, allowing for a combination of measurement types, would be an interesting problem. More work could also be done to classify all nontrivial degenerate cases of \eqref{eq:sls}, such as the one in \figurename~\ref{fig:degenerate_2d_2}.

\appendices

\section*{Acknowledgment}
This work was partially supported by the Wallenberg Artificial Intelligence, Autonomous Systems and Software Program (WASP) funded by the Knut and Alice Wallenberg Foundation, and the strategic research project ELLIIT. The authors also gratefully acknowledge Lund University Humanities Lab.

\section*{Code Availability}
Code and data can be found at \url{https://github.com/martinkjlarsson/trilateration}.

\ifCLASSOPTIONcaptionsoff
  \newpage
\fi

\bibliographystyle{IEEEtran}
\bibliography{IEEEabrv,ref}

\end{document}